\documentclass[10pt]{amsart}
\usepackage[english]{babel}
\usepackage{latexsym,amsthm,amssymb,amsmath,amsfonts,euscript,amstext}
\usepackage[T1]{fontenc}
\usepackage[utf8]{inputenc}
\usepackage{epsfig}
\usepackage{graphicx,epsfig,color,epstopdf,float}
\usepackage{hyperref}

\newtheorem{theorem}{Theorem}[section]
\newtheorem{corollary}{Corollary}

\newtheorem{proposition}{Proposition}
\newtheorem{conjecture}{Conjecture}

\theoremstyle{definition}
\newtheorem{definition}[theorem]{Definition}
\newtheorem{remark}{Remark}

\title{Boundaries  of\\
 Siegel Disks for 
Conservative Systems}

\author{F.M. Tangerman}

\begin{document}
\maketitle

\date{today}


\tableofcontents

\section{Introduction} \label{chap:intro}
\subsection{Motivation}
Real analytic conservative maps on the plane with a real analytic invariant circle of sufficiently irrational rotation number have also, by standard KAM theory, an invariant annulus, a Herman ring,  in their natural complexification, see also \cite{BS}. Such a Herman ring is foliated by invariant curves that are invariant under the dynamics. Invariant curves in the open Herman ring are
real analytic. We are interested in the regularity of the boundaries
of such a Herman ring. There seems to be little general theory about the structure of the boundary or ends of such a Herman ring, not in the least because these are hard to find by computational analytic means. Here are a few of the questions:
\begin{enumerate}
\item Are the boundary components of a Herman ring topological circles? We suspect that even that may depend on the rotation number. 
\item Is the dynamics topologically transitive on each of the boundary components?
\item If the boundary components are circles, what is their regularity? On general principles these circles should not be analytic: if they were, presumably KAM theory would be able to extend these annuli beyond these curves. Thus these annuli are also at their 'natural' limit of analyticity, for instance defined using the concept of polynomial hull.
\end{enumerate}

When we consider deformations of the dynamics and track Herman rings with
constant rotation number, it is possible for such a Herman ring to reduce to a single curve, the 'last invariant' curve. When the rotation is sufficiently irrational a 'last invariant' curve can not be real analytic, and can not be even $C^2$, as that would automatically imply real analytic. For 'noble' rotation numbers the universality properties of such 'last invariant' curves have been thoroughly, but not entirely conclusively, investigated in the conservative context by the work of R. McKay, see \cite{M,M2}. The general regularity of 'noble' last invariant curves is believed to be around $C^{1.7}$.
It is not even clear if the dynamics on such curves has
to be topologically transitive (i,e no wandering intervals, i.e. no Denjoy counter examples). It is clear, via Aubry Mather theory, that last invariant curves disintegrate into hyperbolic Cantor sets, (Cantori), of prescribed rotation number.
 
In the dissipative context such invariant curves are normally hyperbolic and have been studied more conclusively by D. Rand  et al \cite{ROSS,R}.  In that case such curves can not be even $C^1$.
We refer to \cite{BST} for a general discussion of the bifurcation mechanisms, compelling imagery and appendices, associated with the destruction of invariant circles in dissipative systems.

We contrast this with the study of Herman rings and Siegel disks
for holomorphic or meromorphic maps on the complex plane $\mathbb{C}$ where the understanding of regularity and transitivity properties of the boundaries of Siegel disks and Herman rings is much more developed. Rather than providing specific references we mention
here the lead actors:  Herman, Yoccoz, Jones, Swiatek, Graczyk, Shishikura, Yampolski, Buff, Cheritat, and Avila. For rotation numbers of bounded type the boundary of the Siegel disk tends to
be a fractal containing a critical point. This is certainly the case when the dynamics extends
holomorphically to a neighborhood of the Siegel disk. For entire functions this does not need to be the case and boundaries can be smooth (depending on the nature of the essential singularity). In the $\mathbb{C}^2$ there is little in the way of 'conformal background' although there is a complex structure. One would therefore anticipate that maps have a natural extension only to the closure of domains of holomorphy and not beyond. In that respect there \textbf{should} be a a higher degree of similarity in the behavior of entire functions and that found in conservative systems. 

The primary difficulty in the analysis of the such 'last invariant' curves is that they are difficult to find. We believe that boundaries of Herman rings in the conservative setting may be the natural candidate to study first, when they can be found.

\subsection{Siegel Disks}
   
We consider a specific example where these Herman rings are easily studied, since they \underline{degenerate} into Siegel disks. These examples are related to the interior solution of the KAM problem. Specifically consider the following conservative map introduced in \cite{GP}:
\begin{equation}
(q,p)\to (q+p+ie^{iq},p+ie^{iq})
\label{equation:inner solution}
\end{equation}
where $q$ is defined on the holomorphic cylinder $\mathbb{C}/{2\pi\mathbb{Z}}$ and $p$ is defined on
$\mathbb{C}$. When $q$ tends to infinity in the \textbf{positive} imaginary direction
this map limits to the integrable case:
\begin{equation}
(q,p)\to (q+p,p)
\end{equation}
that has for every real $p$ invariant curves.
Specifically, let $Z=e^{iq}$, $W=e^{ip}$, then Equation \ref{equation:inner solution} reduces to:
\begin{equation}
(Z,W)\,\to (ZWe^{-Z},We^{-Z})
\label{equation:inner solution Siegel}
\end{equation}
which has the curve $Z=0$ as set of fixed points. When $|Z|$ is small
the dynamics is approximately $(Z,W)\to (ZW,W)$ and with $W=\lambda=e^{2\pi i \alpha}$ a rotation by angle $\alpha$.

It is a remarkable property of this map is that for any given $\lambda$ on the unit circle, the corresponding invariant Siegel disk on which the dynamics is conjugate to the holomorphic map $x\to\lambda x$ is easily described in terms
of a power series with \emph{positive} coefficients that is furthermore \emph{convergent} on its radius of convergence.

Precisely, in \cite{SV}, which also concentrates on the domain of analyticity
for KAM rings  or Herman rings in conservative systems it was observed that the dynamics on the Siegel disk:
\begin{equation}
(Z(x),W(x))\to (Z(\lambda x),W(\lambda x))
\end{equation}
can be described as follows.
The point $(0,\lambda)$ is the center of the Sigel disk and we factor
$Z(x)$ as $Z(x)=xe^{m(x)}$, with $Z(0)=0$. We may assume the normalization $Z'(0)=1$, i.e. $m(0)=0$. Since $W(\lambda x) = \dfrac{Z(\lambda x)}{Z(x)}=\lambda e^{m(\lambda x)-m(x)}$ one obtains as equation for $m$:
\begin{equation}    
m(\lambda x)-2m(x)+m(\dfrac{x}{\lambda})=-xe^{m(x)}
\label{equation: standard like map}
\end{equation}
which has these properties:
\begin{itemize}
\item When the rotation number $\alpha$ is sufficiently irrational, the power series solution $m(x)=\sum_k m_k x^k$ has \underline{positive}
coefficients and a \underline{positive} radius of convergence.
\item On any circle $|x| = r$ we have the apriori bound for $M(r)= max_{|x|=r}|m(x)|$   
$$re^{M(r)}\,\leq\, 4 M(r)$$or equivalently:
\begin{equation}
r\,\leq\,4 M(r)e^{-M(r)}
\end{equation}
The function $f(M)=Me^{-M}$ $M>0$ has a maximum when $M=1$, and tends to zero
as $M$ tends to infinity. Thus the radius of convergence $r_{\alpha}$ is bounded by $\dfrac{4}{e}$ and $M$ remains bounded as $r$ tends  to $r_{\alpha}$, since otherwise $r$ would have to tend to $0$. Since the coefficients are positive, $m(x)$ 
is then \underline{bounded and continuous} on the radius of convergence $r_{\alpha}$.
\end{itemize} 
We consider the \emph{embedding} of the Siegel disk in $\mathbb{C}^2$:
\begin{equation}
x\to \Gamma(x)=(Z(x), m(\lambda x)-m(x))
\end{equation}
This embedding extends to the closed disk $|x|\,\leq\,r_{\alpha}$ as a continuous
function. We have an immediate conclusion: the dynamics on the boundary
of its image is topologically transitive: 
there are no wandering intervals, because the map $\theta\,\to \Gamma(r_{\alpha} e^{i \theta})$ is a continuous semi-conjugacy.
However it is not immediate that this map is a conjugacy.

We propose the following conjecture:

\begin{conjecture}
If $\alpha$ is of bounded type then the map
$\theta \to \Gamma(r_{\alpha} e^{i \theta})=\Gamma_{\alpha}(\theta)$ 
is at least $C^1$.
\end{conjecture}

If we can prove this conjecture then we have an immediate important corollary:

\begin{corollary}
If $\alpha$ is of bounded type then the map
$\theta \to \Gamma_{\alpha}(\theta)$ is an embedding and hence a conjugacy.
\end{corollary}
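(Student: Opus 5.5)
Assuming the Conjecture, $\theta\mapsto\Gamma_\alpha(\theta)$ is $C^1$. We want injectivity on the circle, since a continuous injective map of a compact space is a homeomorphism onto its image. Conjugacy is then automatic: the relation $\Gamma(\lambda x)=F(\Gamma(x))$ ($F$ the map (\ref{equation:inner solution Siegel}) in the $(Z,\log W)$ coordinates of the embedding) holds in the open disk. It extends to $|x|=r_\alpha$ by continuity, so a bijective $\Gamma_\alpha$ conjugates $F$ on the boundary curve to the rotation $\theta\mapsto\theta+2\pi\alpha$.

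For injectivity I would argue by the rotation dynamics. Let $S=\{(\theta_1,\theta_2):\Gamma_\alpha(\theta_1)=\Gamma_\alpha(\theta_2)\}$. By the conjugacy relation, $S$ is a closed set invariant under the diagonal rotation $(\theta_1,\theta_2)\mapsto(\theta_1+2\pi\alpha,\theta_2+2\pi\alpha)$. Since $\alpha$ is irrational, the orbit of any off-diagonal point $(\theta_1,\theta_2)$ is dense in the closed curve $\{\theta_2-\theta_1=\delta\}$, $\delta=\theta_2-\theta_1$. Hence if $S$ contains a point with $\delta\ne0$, then $\Gamma_\alpha(\theta)=\Gamma_\alpha(\theta+\delta)$ for all $\theta$, so $\Gamma_\alpha$ has period $\delta$. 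The set of periods of a continuous function is a closed subgroup of $\mathbb{R}/2\pi\mathbb{Z}$. So either it is finite, generated by $2\pi/n$ with $n\ge2$, or it is everything, in which case $\Gamma_\alpha$ is constant.

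The remaining task is to rule out nontrivial periods, using the structure of $\Gamma$. The first coordinate is $Z(x)=xe^{m(x)}$ with $m$ continuous on the closed disk. If $\Gamma_\alpha(\theta+2\pi/n)=\Gamma_\alpha(\theta)$, then the winding number of $\theta\mapsto Z(r_\alpha e^{i\theta})$ around $0$ would be divisible by $n$, provided the curve avoids $0$. It does avoid $0$, since $|Z|=r_\alpha e^{\mathrm{Re}\,m}>0$. That winding number equals $1$, because $\theta+\mathrm{Im}\,m(r_\alpha e^{i\theta})$ has degree one, $m$ being a continuous single-valued function on the circle. This contradiction gives $n=1$ and excludes the constant case as well. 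So $S$ is the diagonal and $\Gamma_\alpha$ is injective.

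Notice that this argument only uses continuity of $\Gamma_\alpha$, which the paper already has from positivity of coefficients and the a priori bound. The $C^1$ hypothesis is what makes the image an embedded curve in the stronger sense of an immersion. For that I would additionally need $\Gamma_\alpha'(\theta)\ne0$. Differentiating the conjugacy $\Gamma_\alpha(\theta+2\pi\alpha)=F(\Gamma_\alpha(\theta))$ shows that the zero set of $\Gamma_\alpha'$ is rotation invariant and closed. It is therefore either empty or the whole circle, and the latter would make $\Gamma_\alpha$ constant, which is already excluded.

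The step I expect to be most delicate is this $C^1$-to-immersion step. Near points where $|\Gamma_\alpha'|$ is small, one must check that invariance of the zero set really follows from the derivative of $F$ being invertible along the curve. It does, since $F$ is a holomorphic automorphism with polynomial-exponential inverse.
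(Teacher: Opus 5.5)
Your proposal is correct. For the injectivity step, which is the core of the corollary, you take a genuinely different route from the paper.

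The paper works entirely under the $C^1$ hypothesis. It argues that the zero set of $\Gamma_\alpha'$ is invariant, so $\Gamma_\alpha$ is either an immersion or constant. The constant case is announced as ``easily ruled out'' but never actually treated. It then says the set of non-injective points is invariant ``and therefore must be empty.'' That last inference is incomplete as written: a closed invariant set of an irrational rotation can be the whole circle. Nothing in the paper excludes, for instance, $\Gamma_\alpha$ being a $2$-to-$1$ cover of its image.

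Your argument via the closed pair set $S$ on the torus supplies exactly what is missing. A single coincidence spreads along the dense diagonal orbit to a global period, and the periods form a closed subgroup of the circle. The winding number $1$ of $\theta\mapsto r_\alpha e^{i\theta}e^{m(r_\alpha e^{i\theta})}$ then eliminates both the finite cyclic and the constant alternatives.

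What your route buys is independence from the $C^1$ Conjecture for injectivity. A topological embedding and a conjugacy follow from continuity of $\Gamma$ on the closed disk alone, which the paper already derives from positivity of the coefficients and the a priori bound. The conjecture is needed only to upgrade to an immersion, and there your argument coincides with the paper's first step. That step is less delicate than you fear: closedness and forward invariance of the zero set already give the dichotomy, so invertibility of $DF$ is not even needed.

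To make the map you invoke explicit: with $\Gamma(x)=(Z(x),m(\lambda x)-m(x))$, the functional equation gives $\Gamma(\lambda x)=F(\Gamma(x))$ with $F(Z,u)=(\lambda Ze^{u},\,u-\lambda Ze^{u})$. This $F$ is a holomorphic automorphism of $\mathbb{C}^2$, with inverse given by $u=u'+Z'$, $Z=\lambda^{-1}Z'e^{-u}$.
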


\begin{proof}
Since the map $\Gamma_{\alpha}$ is $C^1$ we can consider its singularities,
the points $\theta$ where $\dfrac{d}{d \theta} \Gamma_{\alpha}$ vanishes.
It follows easily from differentiation of Equation \ref{equation: standard like map} that the set of singularities is invariant under the map.
Since the derivative is $C^0$ there is then an alternative:
\begin{itemize}
\item The derivative is nowhere zero, in which case the map, $ \Gamma_{\alpha}$ is an immersion, or,
\item The map $\Gamma_{\alpha}$ is constant. 
\end{itemize}
The last case is easily ruled out (as we will see).
The points where $\Gamma_{\alpha}$ fails to be an embedding is also seen to
be invariant under the dynamics, and therefore must be empty.
\end{proof}

We then conjecture, and show numerical evidence, that the image curve, the boundary of the Siegel disk,
 is not particularly smooth:

\vskip.1in
\noindent

\begin{conjecture}
If $\alpha$ is of bounded type then there is a $1<p<3$ so that the map
$\theta \to \Gamma_{\alpha}(\theta)$ 
is not of class $C^p$.
\end{conjecture}

\begin{remark}
We conjecture that a sharp estimate for $p$ would be a bit higher than the McKay's (\cite{M}) estimate of around $1.7$ for the 'last' noble invariant circle. There appears to be more 'wiggle room' in $\mathbb{C}^2$.
\end{remark}

\vskip.1in
\noindent

We next consider the embedding properties of the closure of the Siegel disk.
Since it is embedded in $\mathbb{C}^2$ it could still be complicated, but
we show it is in fact a graph. Clearly the map $x\to Z(x)$ is univalent near $x=0$. We then have that the correspondence:
$Z(x)\to x \to W(x)$ defines a holomorphic map $Z\,\to\,W\equiv H(Z)$ and we show that this extends.

\begin{figure}
\centering
{\includegraphics[width=3.0in]{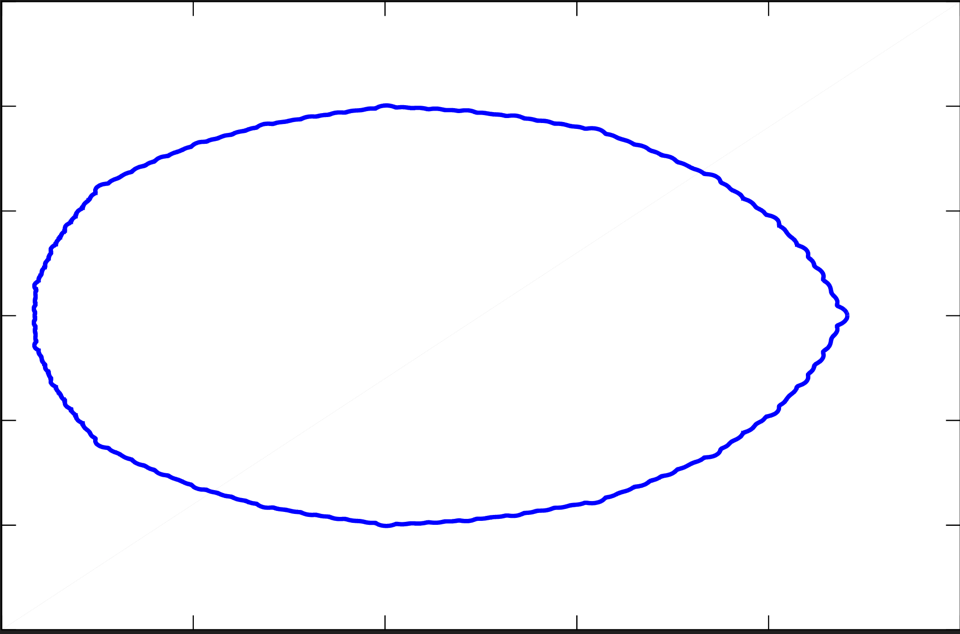}} \\
\caption{
\emph{ 
Shown is the image of a circle close to the radius of convergence.
It is more or less evident that there are no self-intersections, and
that the map $x\to Z(x)$ is univalent. It also shows that the boundary
has small detail wiggles on, suggesting that it may not be that smooth.
 }}
\label{fig:Z-image}
\end{figure}

\begin{figure}[ptbh]
\centering
\scalebox{0.5}{\includegraphics{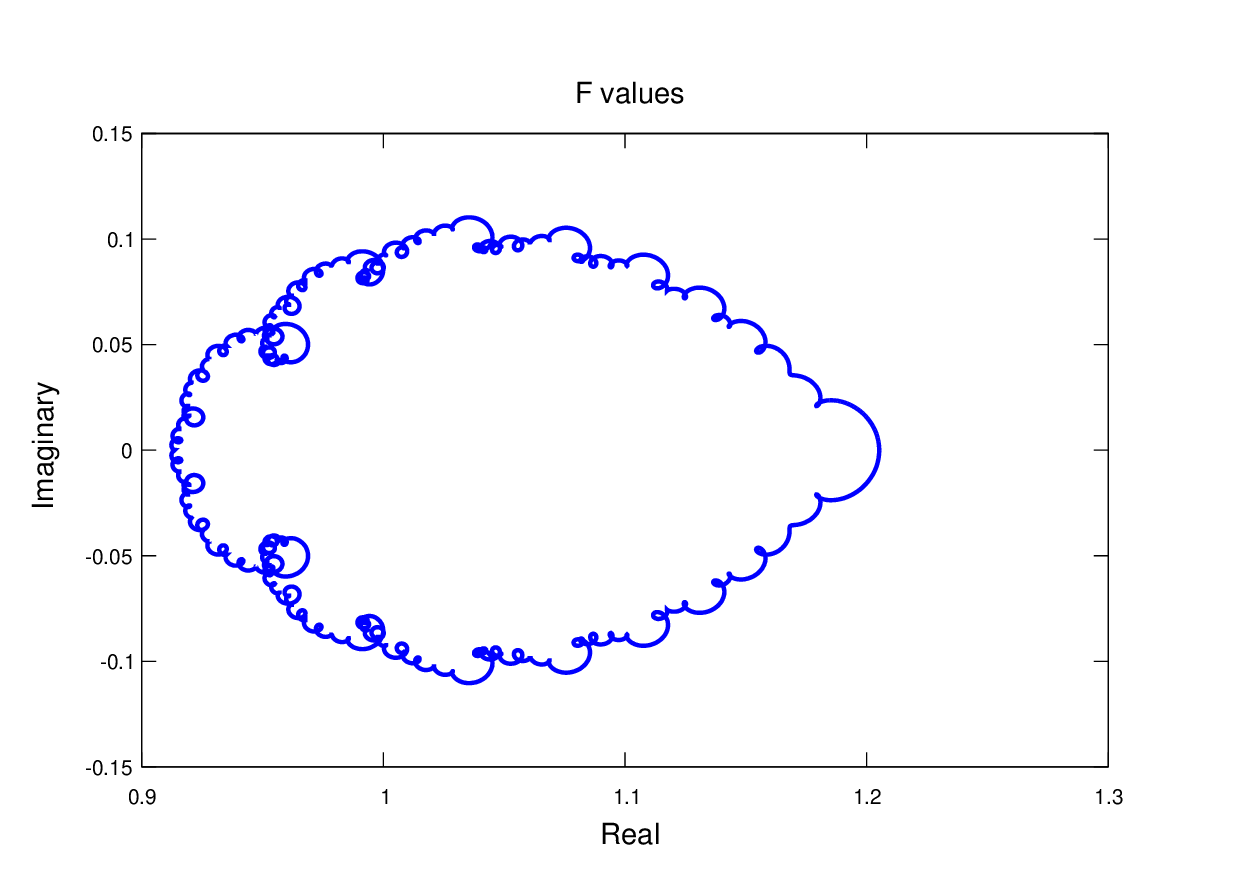}} \\
\caption{\emph{ 
Shown is the phase of the map $x\to Z(x)=xF(x)$ at a circle close to the radius of convergence. The map $x\to F(x)$ is clearly not univalent
and has a lot of fine structure particularly toward the left of the
image.
 }}
\label{fig:F-image}
\end{figure}

As part of this project we propose as conjecture:

\begin{conjecture}
If $\alpha$ is of bounded type then the corresponding Siegel disk is a graph: its image is of the form $W=H(Z)$ with $H$ holomorphic.
\begin{itemize}
\item
The map $Z(x)\to Z(\lambda x)$ is given by a holomorphic map
$Z\to G(Z)$ defined on the projection of the Siegel disk in the $Z-plane$ and:
\begin{equation}
G(Z)G^{-1}(Z)=Z^2e^{-Z}
\label{eq: G equation}
\end{equation}
\item
$G$ extends to the boundary as a diffeomorphism of class $C^{<p}$.
\end{itemize}
\end{conjecture}

Equation \ref{eq: G equation} is a holomorphic version of the approach introduced by Mather for the standard map: to determine parameter values for which there are no invariant circles:
\begin{equation}
g(x) + g^{-1}(x) = 2 x + \dfrac{k}{2\pi}sin(2\pi x) 
\end{equation}


\section{Master Equation}

The key idea, introduced in \cite{SV}, is to study the recursion
relation for the coefficients of the function $m(x)=\sum_k m_k x^k$.

The recursion relation for the coefficients is quite simple,
see \cite{SV} Equations 3.4 and 3.5 \footnote{
We are differing by a factor of 2 due to the slightly form of the
equation, easily absorbed by a linear change of 'x' variable.}.
Let $F(x)=e^{m(x)}$. With $m(0)=0$ (normalization condition)
we have that $F(x)=\sum_k F_k x^k$ with $F_0=1$ and

\begin{eqnarray}
F(x)=e^{m(x) }\Longleftrightarrow kF_k &=&\sum_{j=1}^{k} jm_j F_{k-j}\\
m(\lambda x) - 2m(x) + m(\dfrac{x}{\lambda})=-xF(x) \Longleftrightarrow m_{k+1} &=& \dfrac{F_k}{{{D}}(k+1)}
\label{eq:recursion}
\end{eqnarray}

where
${{D}}(k)\equiv 2(1-cos(k\alpha))= 4sin^2(k\dfrac{\alpha}{2}) >0$.
The reason we leave the factor of $2$ in the definition of
${{D}}$ is two-fold: $2(1-cos(x))\sim x^2$ and the function $\ell(x)\equiv\ln(2(1-cos(x))$ is the periodic analog of the function $\ln(x)$ for the Fourier transform.

From Equation \ref{eq:recursion} it is clear the coefficients
of $m$ and those of $F$ are positive.
Let $\alpha$ be sufficiently irrational. We can assume that
the radius of convergence $r\equiv r_{\alpha}$ is positive.
We then conclude (as in the introduction) that the functions
$m(x)$ and therefore also $F(x)$ are 
uniformly convergent when $|x|=r$:
\begin{equation}
\sum_k F_k r^k <\infty
\end{equation} 

Express Equation \ref{eq:recursion} in a single Master equation
for the coefficients $G(k,r)\equiv F_k r^k$ and $G(0,r)=1$ and $r\leq r_{\alpha}$:
\begin{equation}
Master:\;k G(k,r) = r\sum_{j=1}^k j \dfrac{G(j-1,r)}{{{D}}(j)}G(k-,rj)
\label{eq:master equation}
\end{equation}

This type of relation is very common in non-linear systems and
is by itself not very promising without some additional information:
the coefficients $G(k)$ are positive and
\begin{equation}
\sum G(k,r) < \infty
\end{equation}

\textbf{Note:}
In 'practice' the radius of convergence can be determined by the following
equivalent recipes. For any given $r$ determine the sequence $G(k)(r)$
from the Master Equation and determine if it tends to infinity or not:
\begin{equation}
r_{\alpha} \equiv \sup_r \{\limsup_k G(k,r) < \infty \}
\end{equation}
Since the equation has positive coefficients the equivalent criterion
is:
 \begin{equation}
r_{\alpha} \equiv \inf_r \{\limsup_k G(k,r) = \infty \}
\end{equation}

\begin{figure}[ptbh]
\centering
\scalebox{0.5}{\includegraphics{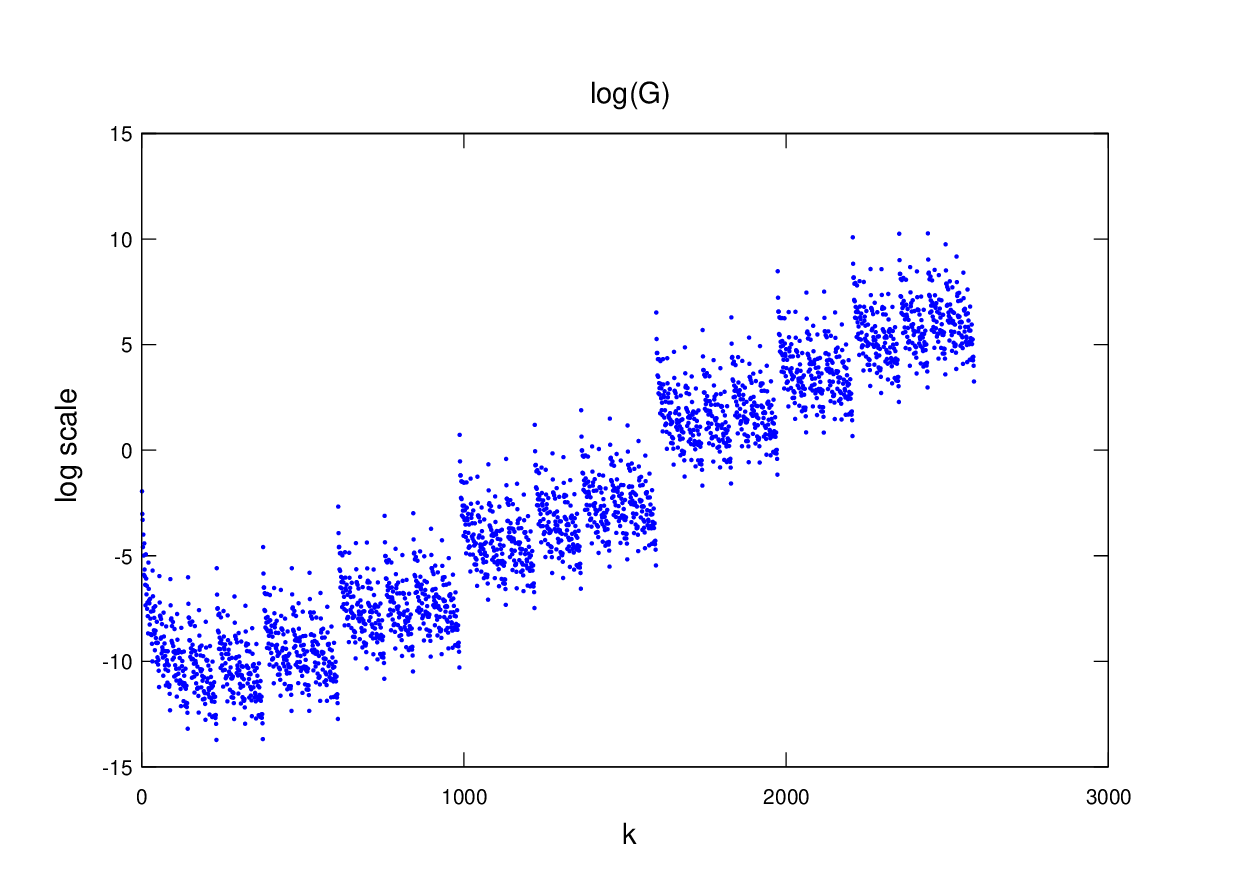}}
\caption{\emph{ 
Shown is the sequence $\ln(G(k))$ with radius $r$ slightly above
the radius of convergence, for the golden mean rotation number.
The linear growth indicates that we are above the radius of convergence.
There is however 'fine' detail that needs to also be explained.
 }}
\label{fig:logG-image}
\end{figure}

As is evident from Figure \ref{fig:logG-image} there is some pattern, that we will
elucidate first on the 'first order' approximation of the Master Equation (only
the first term):
\begin{equation}
Master\;First\;Order:\;G_1(k,r) = r\dfrac{G_1(k-1,r)}{{{D}}(k)}
\label{eq:first master equation}
\end{equation}

We have as the coefficients are positive that for any $r$, $G(k,r) > G_1(k,r)$.
We also have an explicit solution for $G_1(k,r)$:
\begin{equation}
G_1(k,r)=r^k\dfrac{1}{\prod_1^k {{D}}(j)}
\end{equation}

Note that the first order Master Equation corresponds to solving the "linearized" functional  equation, with $m(0)=1$, (write $e^m=1+m$ and replace $m$ by $1+m$):
\begin{equation}
m(\lambda x)-2m(x)+m(\dfrac{x}{\lambda})=-xm(x)
\label{eq: linearized equation}
\end{equation}

We will study its solution in the next section.

\vskip.2in

\section{Linearized Master Equation} \label{section :linearized master}

\vskip.2in
The radius of convergence of the linearized Master Equation is considerably
larger than that for the system of interest.
First we have the upper bound, as before, with $M(r)=max_{|x|=r}|m(x)|$:
$$
rM(r)\,\leq 4 M(r)
$$
and thus $r\,\leq\,4$. 
We note that this estimate provides no implicit bound on $M$ (linear equation)
and that will show up in 'poor' control on the radius of convergence.

We will show that the radius is exactly equal
to one when $\lambda$ is irrational.
What is required is to determine:
$$
\lim_{n\to\infty} \dfrac{\ln (\prod_{j=1}^n {{D}}(j))}{n}=
\lim_{n\to\infty}\dfrac{1}{n}\sum_{j=1}^n \ln({{D}}(j))=\dfrac{1}{2\pi} \int_0^{2\pi} \ell(x)\,dx
$$
using the ergodic theorem\footnote{
The function $\ell(x)$ has only a logarithmic singularity and is therefore integrable (in $L^1$ and in $L^2$). Therefore the ergodic theorem can be applied. Its derivative does have a singularity to
be conquered.}.

We now have the following simple identities for the Fourier Transform:
\begin{eqnarray}
\ell(x)=\ln(2(1-cos(x))&=&\ln((1-e^{ix})(1-e^{-ix})\\
               &=&\ln(1-e^{ix})+\ln(1-e^{-ix})\\
               &=&-\sum_{k=1}^{\infty} \dfrac{e^{ikx}+e^{-ikx}}{k}
\label{eq: fft ln}
\end{eqnarray}
Since the constant term is 'absent' it must be that:
$\int_0^{2\pi} \ln(2(1-cos(x)) dx = 0$.

Therefore:

\begin{proposition}
Let $\lambda$ be irrational and 
let $m(x)$ be the solution of the linearized Master Equation, with $m(0)=1$
then the power series for $m$ has radius of convergence equal to 1.
\end{proposition}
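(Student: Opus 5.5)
The plan is to reduce the statement to a statement about the Birkhoff averages of $\ell$ along the rotation by $\alpha$. The linearized recursion is the first order Master Equation, so the coefficients of $m$ are $c_k = 1/\prod_{j=1}^k D(j)$. By Cauchy--Hadamard the radius of convergence is
\begin{equation*}
R=\Bigl(\limsup_k c_k^{1/k}\Bigr)^{-1}=\exp\Bigl(\liminf_{k\to\infty} A_k\Bigr),\qquad A_k=\frac1k\sum_{j=1}^k \ell(j\alpha).
\end{equation*}
It therefore suffices to prove $A_k\to 0$. The target value $0$ is $\frac{1}{2\pi}\int_0^{2\pi}\ell$, which vanishes by the Fourier expansion (\ref{eq: fft ln}), since $\ell$ has no constant term. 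The footnoted appeal to the ergodic theorem gives convergence only for almost every starting point. We need the specific orbit of $0$, so I would replace it by a unique-ergodicity argument with truncations, split into an upper and a lower bound.

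\emph{Upper bound, valid for every irrational $\alpha$.} For $M>0$ set $\ell_M=\max(\ell,-M)$. This function is continuous and satisfies $\ell\le \ell_M$. The rotation by an irrational $\alpha$ is uniquely ergodic, so Weyl equidistribution applies to continuous functions and gives
\begin{equation*}
\limsup_k A_k\le \lim_k \frac1k\sum_{j=1}^k\ell_M(j\alpha)=\frac{1}{2\pi}\int_0^{2\pi}\ell_M .
\end{equation*}
Letting $M\to\infty$ and using monotone convergence with $\ell\in L^1$, the right side tends to $0$. Hence $\limsup A_k\le 0$, that is $R\le 1$. This is consistent with the a priori bound $r\le 4$ but sharper.

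\emph{Lower bound.} Write $\ell=\ell_M+(\ell-\ell_M)$. The error $e_M=\ell-\ell_M\le 0$ is supported on $\{|x|<\delta\}$ mod $2\pi$, where $2(1-\cos\delta)=e^{-M}$, and on that set it is bounded in size by $2|\ln|x||$. The continuous part again contributes $\int\ell_M\to 0$, so it remains to show
\begin{equation*}
\liminf_k \frac1k\sum_{j\le k} e_M(j\alpha)\ge -\varepsilon(\delta), \qquad \varepsilon(\delta)\to 0 .
\end{equation*}
Fix $q_n\le k<q_{n+1}$, the denominators of the continued fraction of $\alpha$. By the three-gap theorem the points $j\alpha$, $1\le j\le k$, are mutually separated by at least $\|q_n\alpha\|\cdot 2\pi$. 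I would order the points lying in $(-\delta,\delta)$ by distance to $0$: the $i$-th closest is at distance at least of order $\max(i\,\|q_n\alpha\|,\, i/k)$. Summing logarithms then bounds the bulk by $C\,k\int_{|x|<2\delta}|\ln|x||\,dx=k\,o_\delta(1)$. What is left is a bounded number of exceptional terms, the closest returns $j\in\{q_{n-1},q_n\}$, each of size about $\ln(1/\|q_n\alpha\|)\approx \ln q_{n+1}$.

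The main obstacle is precisely this closest-return term, $\ln q_{n+1}/k$ with $k\ge q_n$. I would first try to absorb it using the cancellation in the Sudler product. Via the exact identity $\prod_{j=1}^{q-1}2|\sin(\pi jp/q)|=q$ and a comparison of $j\alpha$ with $jp_n/q_n$, one gets
\begin{equation*}
\sum_{j=1}^{q_n}\ell(j\alpha)=\ln\bigl(q_n^2\,\|q_n\alpha\|^2\bigr)+O(1)\approx -2\ln\frac{q_{n+1}}{q_n}.
\end{equation*}
The large negative resonant term is thus partially offset by the positive $2\ln q_n$ coming from the other factors. I would then decompose a general $k$ by the Ostrowski expansion $k=\sum b_i q_i$ and apply this block estimate, in the spirit of Denjoy--Koksma, to obtain $\sum_{j\le k}\ell(j\alpha)\ge -C\sum_i b_i\ln(q_{i+1}/q_i)-o(k)$. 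Dividing by $k$ reduces $\liminf A_k\ge 0$ to the decay of $\ln(q_{n+1}/q_n)/q_n$ along the relevant scales. This is the step I expect to carry the whole difficulty, because it is exactly where the arithmetic of $\lambda$ enters, and I would scrutinize it carefully to check that it holds for every irrational $\lambda$ rather than only for those with $\ln q_{n+1}=o(q_n)$.
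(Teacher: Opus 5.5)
Your upper bound is correct for every irrational $\lambda$: since $\ell\le\ell_M$ with $\ell_M$ continuous, unique ergodicity and monotone convergence give $\limsup_k A_k\le 0$, i.e.\ $R\le 1$. The gap is the lower bound. It cannot be closed, because the proposition is false in the stated generality.

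Write $\lambda=e^{2\pi i\rho}$ with convergent denominators $q_n$. Then $D(j)=|1-\lambda^j|^2\le 4$ and $D(q_n)\le 4\pi^2\|q_n\rho\|^2<4\pi^2/q_{n+1}^2$. Choose $\rho$ with $q_{n+1}\ge e^{q_n^2}$ for all $n$ (take $a_{n+1}\ge e^{q_n^2}$ recursively). Then $\prod_{j\le q_n}D(j)\le 4^{q_n}\pi^2e^{-2q_n^2}$, so $\bigl(\prod_{j\le q_n}D(j)\bigr)^{1/q_n}\to 0$ and $R=\liminf_k\bigl(\prod_{j\le k}D(j)\bigr)^{1/k}=0$.

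More generally, apply your own truncation bound to the first $q_n-1$ terms and add the single resonant term $j=q_n$. This gives $A_{q_n}\le -2\ln q_{n+1}/q_n+o(1)$, hence $R\le\exp\bigl(-2\limsup_n \ln q_{n+1}/q_n\bigr)$. So the condition $\ln q_{n+1}=o(q_n)$ you were worried about at the end is necessary, not an artifact of your method. The paper's proof is exactly the appeal to Birkhoff's theorem that you rejected. It controls almost every starting point, not the orbit of $0$, which starts at the singularity of $\ell$, so it does not establish the statement either.

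What is true is that $R=1$ if and only if $\ln q_{n+1}/q_n\to 0$. This includes bounded type, Diophantine and Brjuno $\rho$, in particular the golden mean used later in the paper. Under that hypothesis the lower bound needs neither the Sudler product nor an Ostrowski decomposition. Your picture of a bounded number of exceptional points, with the rest at distance $\gtrsim i/k$, is also inaccurate: for $k=bq_n$ the $b$ points $tq_n\rho$, $t\le b$, lie at distances $t\|q_n\rho\|\ll t/k$.

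Replace it by a counting bound. For $q_n\le k<q_{n+1}$ put $N(s)=\#\{1\le j\le k:\|j\rho\|<s\}$.
\begin{itemize}
\item Cover $\{1,\dots,k\}$ by $\lceil k/q_n\rceil\le 2k/q_n$ windows of $q_n$ consecutive integers. Inside a window the points are more than $1/(2q_n)$ apart, because $\|m\rho\|\ge\|q_{n-1}\rho\|>1/(2q_n)$ for $0<m<q_n$. Hence $N(s)\le 8ks+2k/q_n$.
\item By best approximation, $N(s)=0$ for $s<1/(2q_{n+1})$.
\end{itemize}
With $\ell$ rescaled to period $1$, the truncation error $\ell-\ell_M$ is supported on $\|x\|<\delta$ and bounded there by $2\ln(1/\|x\|)$. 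The layer-cake identity $\sum_{\|j\rho\|<\delta}\ln(\delta/\|j\rho\|)=\int_0^\delta N(s)\,ds/s$ then bounds the average of this error over $j\le k$ by $O(\delta\ln(1/\delta))+O(\ln q_{n+1}/q_n)$. Combined with $\frac1k\sum_{j\le k}\ell_M(j\rho)\to\int\ell_M\ge 0$, this gives $\liminf_k A_k\ge 0$.
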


\begin{proof} Consider
the set of $r\geq\,0$ for which:
\begin{eqnarray}
lim_{k\to\infty} \dfrac{1}{k} ln(\dfrac{r^k}{\prod_{j=1}^n {{D}}(j)})\,&\leq&\,0\\
\ln(r)-lim_{k\to\infty} \dfrac{1}{k} \sum_{j=1}^n \ln({{D}}(j))\,&\leq&\,0
\end{eqnarray}
Since the second term is equal to zero this reduces
to the relation: $\ln(r)\,\leq\,0$, proving that the radius of convergence
is equal to 1.
\end{proof}

The power series for the solution $m$ of the linear Master Equation
has positive coefficients and one might ask
if it is ever uniformly convergent on the boundary of the unit disk, the circle of radius equal to 1.
The answer is simply no.
Convergence is always subtler: while the 
logarithm of the radius of convergence in this example
is related to the \textbf{mean} of the sequence $\ln({{D}}(n))$, 
the \textbf{deviations from the mean} determine actual convergence at the radius of convergence. In this example we will have small denominators
and as a result one expects that this series never converges at the radius
of convergence. We will demonstrate this now. Consider the series:
$$
\sum_n \dfrac{1}{\prod_{j=1}^n {{D}}(j)}
$$
If this were convergent then 
$$
\lim_{n\to\infty} {\prod_{j=1}^n {{D}}(j)} = \infty
$$
Consider therefore the following sequence of Weyl sums:

$$
S\ell(n)\equiv\ln \prod_{j=1}^n {{D}}(j)\,=\,\sum_{j=1}^n \ln({{D}}(j))
$$
Convergence  thus requires that:
$$
\lim_{n\to\infty} S\ell(n) = \infty
$$

Figure \ref{fig:SlD-image} shows the distribution properties of the
sequence $S\ell(n)$, and that sequence does not appear to go infinity.

\begin{figure}[ptbh]
\centering
\scalebox{0.3}{\includegraphics{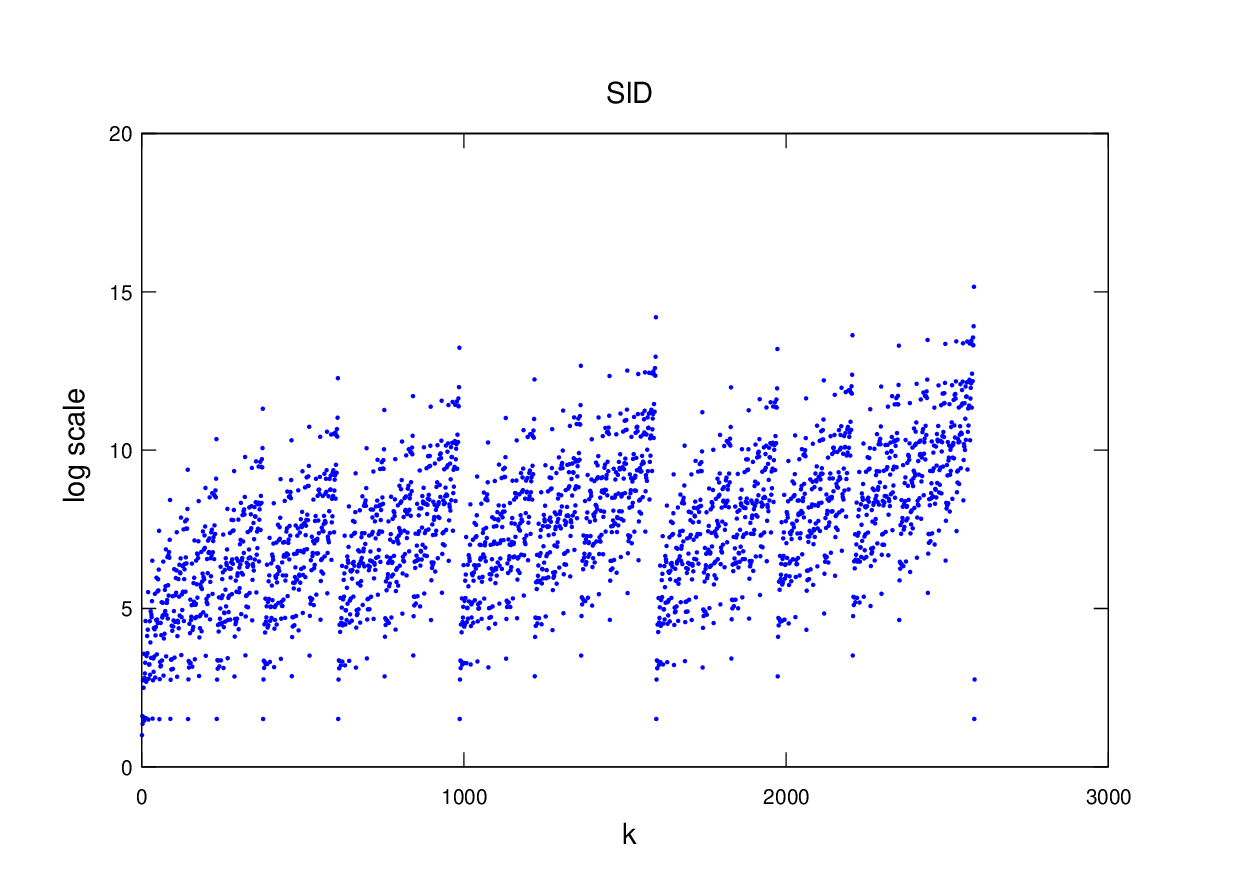}} \\
\caption{\emph{ 
Shown is the sequence $S\ell(k)$, for the same index range.
This sequence has two envelopes: a lower envelope which
is approximately constant,
and an upper envelope that is approximately linear in $\log(k)$.
The lower envelope corresponds to the values $S\ell(q_n)$, while the upper
envelope corresponds to the indices just preceding: $S\ell(q_n-1)$.
 }}
\label{fig:SlD-image}
\end{figure}

\begin{figure}
\centering
\scalebox{0.3}{\includegraphics{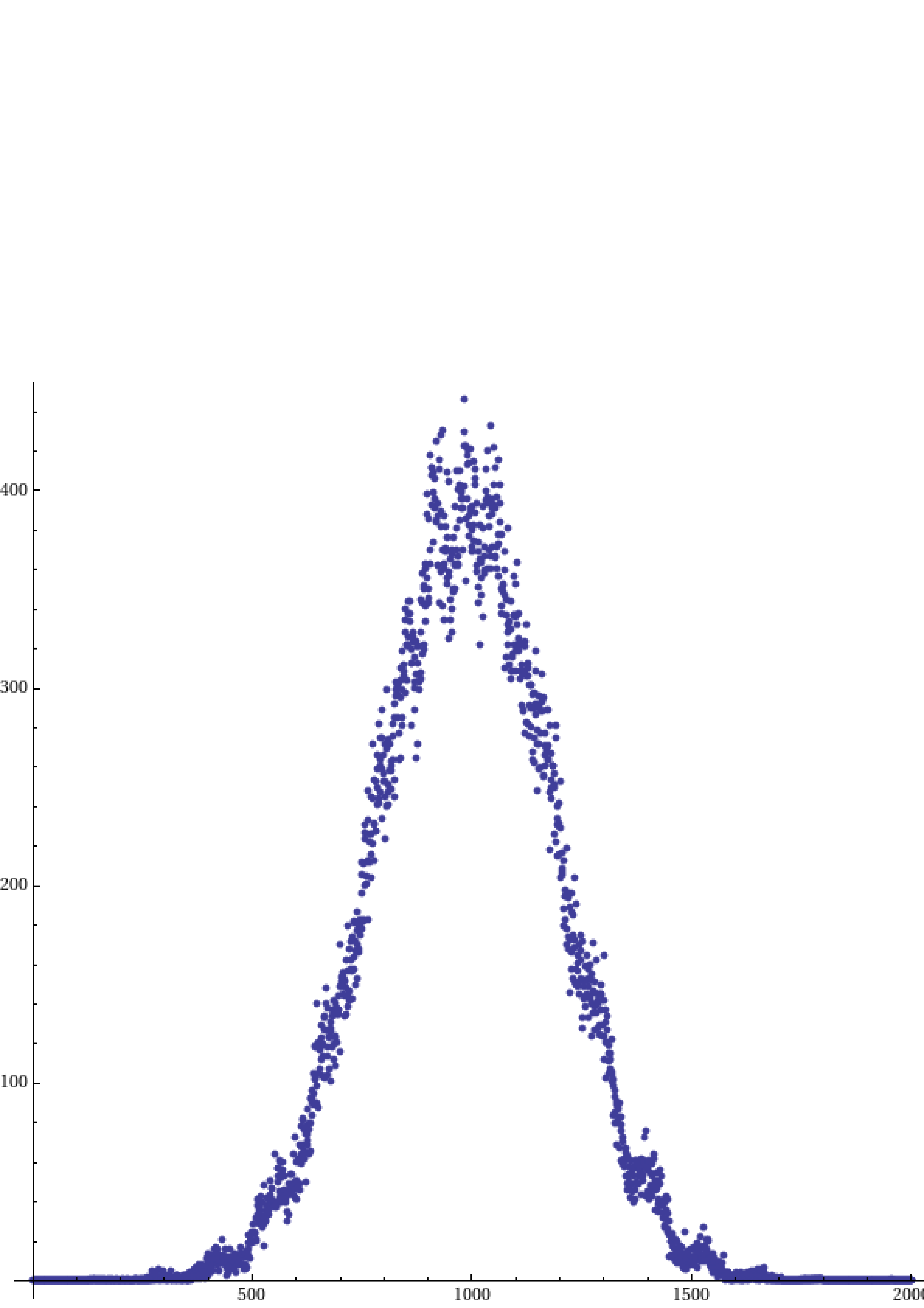}} \\
\caption{The distribution of $S_k(\alpha)/log(k)$, where
$k$ ranges from $1,...,q_{27}$, sampled at 2000 uniform
bins covering the interval $[0,2]$. This distribution
has \textbf{compact support} and looks symmetric but complicated.
}
\label{distribution}
\end{figure}

\begin{theorem}[Distribution Properties]
With $\alpha$ equal to the golden mean
\begin{enumerate} 
\item 
\begin{eqnarray}
\limsup_n S\ell(n)/\log(n)&=&2\\
\liminf_n S\ell(n)/log(n)&=& 0
\end{eqnarray}
The distribution $S\ell(n)/log(n)$ is not normal,
but is symmetric with respect to the value 1,
see \ref{distribution}
\item Lower Envelope:
$\liminf_n S\ell(n) = lim_{n\to\infty} S\ell(q_n) > 0$
\end{enumerate}
Therefore the solution of the linear Master Equation diverges at the radius
of convergence.


\end{theorem}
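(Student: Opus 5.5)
The plan is to work with the Sudler-type products
\[
P_n=\prod_{j=1}^n 2|\sin(\pi j\theta)|,\qquad S\ell(n)=2\ln P_n ,
\]
where $\theta$ is the golden mean and $q_n$ are the Fibonacci denominators of its convergents. I am reading ${{D}}(j)$ as $4\sin^2(\pi j\theta)$, i.e.\ taking the angle in ${{D}}$ to be $2\pi\theta$. The basic tool is the exact rational identity
\[
\prod_{j=1}^{q-1}2|\sin(\pi j p/q)|=q .
\]
I will apply it to the best approximation $p_n/q_n$, using $\|q_n\theta\|\asymp 1/(\sqrt5\, q_n)$.

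\emph{Step 1: the two envelopes at $q_n$.} Write $\theta=p_n/q_n+\epsilon_n$ with $|\epsilon_n|\asymp q_n^{-2}$. Then each factor of $P_{q_n-1}(\theta)$ is a perturbation of the corresponding factor of the rational product. Pairing $j$ with $q_n-j$ and using $\sin(a+\delta)/\sin a=\cos\delta+\cot a\,\sin\delta$, the logarithm of the ratio becomes $\sum_j \ln\bigl(1+O(j\epsilon_n\cot(\pi jp_n/q_n))\bigr)$. Its first-order part is a Riemann-type sum that converges to an explicit constant $c_*$, so
\[
P_{q_n-1}=q_n\,e^{c_*+o(1)} .
\]
Multiplying by the last factor $2|\sin(\pi q_n\theta)|\asymp 2\pi/(\sqrt5 q_n)$ gives that $P_{q_n}$ converges to a positive constant. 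This is in line with the known Mestel--Verschueren / Grepstad--Kaltenb\"ock--Neum\"uller limit for the golden mean. Hence $S\ell(q_n)\to L>0$ and $S\ell(q_n-1)=2\ln q_n+O(1)$, which gives the lower-bound half of the limsup claim, $\limsup_n S\ell(n)/\log n\ge 2$.

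\emph{Step 2: general $n$ via Zeckendorf/Ostrowski.} Write $n=q_{k_1}+q_{k_2}+\dots$ with $k_1>k_2>\dots$ non-adjacent. Then $P_n$ factors as a product of shifted blocks
\[
\prod_{j=1}^{q_{k_i}}2|\sin(\pi(j+N_i)\theta)|,\qquad N_i=q_{k_1}+\dots+q_{k_{i-1}} .
\]
Redoing Step 1 with a shift $x=N_i\theta \bmod 1$, each block equals $q_{k_i}\cdot 2|\sin(\pi(q_{k_i}\theta+x'))|$ times a bounded factor. This holds for the rational analogue exactly and for $\theta$ up to controlled errors. The shift $x'$ is small because of the Ostrowski structure: it is of order $\sum_{l<i}\|q_{k_l}\theta\|$.

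These block estimates give two bounds. The upper bound is $\ln P_n\le \ln n+C$, which yields $\limsup_n S\ell(n)/\log n\le 2$. The lower bound is $\ln P_n\ge L/2-o(1)$, i.e.\ $\liminf_n S\ell(n)=\lim_n S\ell(q_n)$: the minimum over a Fibonacci window is attained at $n=q_k$. With Step 1 this gives $\liminf_n S\ell(n)/\log n=0$.

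\emph{Step 3: the symmetry about $1$.} For $0\le m<q_k$, the rational identity gives the reflection
\[
P_{q_k-1-m}(\theta)\,P_m(\theta)\approx q_k\cdot(\text{bounded factor}) ,
\]
because the factors with index $q_k-j$ and $j$ pair up. Taking logs gives $S\ell(q_k-1-m)+S\ell(m)=2\ln q_k+O(1)$. Over a range $1\le n\le q_K$ with $\log n\sim\log q_K$ for most $n$, this is the claimed symmetry of the empirical distribution of $S\ell(n)/\log n$ about $1$. Non-normality follows because the support is compact, namely $[0,2]$ by Steps 1--2.

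\emph{Conclusion and main obstacle.} Since $\liminf_n S\ell(n)$ is finite, $\prod_{j\le n}{{D}}(j)\not\to\infty$, so the terms of $\sum_n 1/\prod_{j\le n}{{D}}(j)$ do not tend to zero and the linear Master Equation solution diverges on $|x|=1$. The main obstacle is the uniform lower bound in Step 2, i.e.\ $\liminf_n S\ell(n)>0$. The error terms near the small factors (indices $j$ close to $q_k$) must be controlled uniformly over all shifts and all levels of the Zeckendorf expansion, and errors from different blocks must not accumulate. I would first try the approach of Grepstad--Kaltenb\"ock--Neum\"uller: show the shifted-block function $x\mapsto q_k^{-1}\prod_j 2|\sin(\pi(j\theta+x))|$ converges uniformly on $|x|\lesssim 1/q_k$ to a limit function, and then compare that limit function against $1$ directly, partly numerically, for the golden mean.
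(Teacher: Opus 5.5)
The paper gives no argument of its own here; its proof is only the pointer to \cite{KT}. Your proposal therefore has to carry the whole proof, and it has a genuine gap at Step 2, which is where essentially all of the theorem's content lies.

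In the Zeckendorf decomposition of $n$ there are up to about $\tfrac12\log_\phi n$ blocks. The shift $x_i$ of the $i$-th block is not small on that block's own scale: $|x_i|$ is of the same order as $\|q_{k_i}\theta\|\asymp 1/q_{k_i}$ (it can reach about $\phi^{-1}\|q_{k_i}\theta\|$). So each block contributes an order-one factor depending on the normalized shift $t_i=q_{k_i}x_i$, not a factor $1+o(1)$. Multiplying $\asymp\log n$ such factors, your estimate yields only $|\ln P_n|\le C\log n$ with an uncontrolled $C$. That gives none of the following:
\begin{itemize}
\item the sharp bound $\limsup_n S\ell(n)/\log n\le 2$;
\item the bound $\liminf_n S\ell(n)/\log n\ge 0$;
\item the lower envelope $\liminf_n S\ell(n)=\lim_n S\ell(q_n)$.
\end{itemize}
The case $n=q_k-1=q_{k-1}+q_{k-3}+\cdots$ makes this visible: there $P_n\asymp q_k$ comes from about $k/2$ blocks, each contributing on average a factor of about $\phi^2$. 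So the two inequalities you list after the block estimates do not follow from them; they are the theorem.

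What is missing is the explicit limit function of the shifted Fibonacci blocks, together with a quantitative inequality for it. That inequality must guarantee that appending the lower-order blocks never pushes the product below the leading factor $P_{q_{k_1}}$, with convergence errors summable over the scales $q_{k_i}$. It is a genuinely numerical property of the golden mean, not a consequence of bounded type or of the Ostrowski geometry. Positivity of $\liminf_N P_N$ is known to fail for other quadratic irrationals of bounded type, such as $[0;\overline{a}]$ with $a$ large. Your last paragraph names this Grepstad--Kaltenb\"ock--Neum\"uller route but does not carry it out, so item 2 and the sharp constants in item 1 remain unproved.

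Some further points.
\begin{itemize}
\item The upper bound $S\ell(n)\le 2\ln n+C$ should be derived from a uniform lower bound $S\ell(m)\ge -C$ via your Step 3 reflection, not from the block estimates. That reflection is exact only with a shift: $P_{q_k-1}=P_m\prod_{i=1}^{q_k-1-m}2|\sin\pi(i\theta-\delta_k)|$ with $\delta_k=q_k\theta-p_k$. Removing the shift uniformly in $m<q_k$ requires $\delta_k\sum_{i\le M}\cot(\pi i\theta)=O(1)$ for all $M<q_k$. That needs its own Ostrowski argument, though an easier one since it is additive.
\item In Step 1, convergence of $P_{q_n}$ to some $c>0$ only gives $S\ell(q_n)\to 2\ln c$. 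The claim $L>0$ requires $c>1$, i.e.\ the numerical value $c\approx 2.4$ imported from the literature, which your sketch does not produce.
\item The first-order sum in Step 1 is not a Riemann sum. When $jp_n\equiv\pm r\pmod{q_n}$ with $r$ small, the term $j\epsilon_n\cot(\pi jp_n/q_n)$ has size about $1/r$, so these terms cannot be linearized. What has to converge is a Dedekind-type cotangent sum, whose convergence comes from the self-similarity of the Fibonacci convergents.
\item Deducing non-normality from compact support is empty: a point mass at $1$ is also supported in $[0,2]$. Normality is a question about the fluctuations of $S\ell(n)-\log n$ on their own scale, which none of your steps address.
\end{itemize}
What does survive is the final divergence claim. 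It only needs $\sup_n S\ell(q_n)<\infty$, which Step 1 (or the cited Mestel--Verschueren limit) supplies.
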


\vskip.1in
\noindent
\textbf{Proof:} See: \cite{KT}. QED.

\vskip.1in
\noindent
\textbf{Remarks:}
\begin{itemize}
\item The function $\ell(x)$ is not of bounded variation (BV): its derivative has a $\dfrac{1}{x}$ singularity. Therefore Denjoy-Koksma type approaches fail.This is also true for the weaker regularity property proposed
in \cite{HS} (Zygmund and bounded quadratic variation). 
\item This is not surprising since the condition of the problem is for KAM regularity to fail.
\item We have however a new peculiar property: the sequence $S\ell(q_n)$ 
\footnote{We commit the usual transgression in this field: $q_n$ is the denominator
of the $n^{th}$ continued fraction expansion of $\alpha$ and $p_n$ is its numerator.}
is convergent. This could be taken to mean that the quantities
$S\ell(q_n)$ a signed Dirac measure along the orbit is itself
and integral with properties, i.e. there exists a function $\phi$ of class BV and of average equal to zero so that $S\ell(q_n)=\sum_{k=1}^{q_n} \phi(k\alpha)$.

\item
We note that we can reformulate the linear Master Equation in the following
manner. With
$$V(x)=
\left(
\begin{array}{c}
m(x)\\
m(\dfrac{x}{\lambda})
\end{array}
\right)
$$
and 
$$
A(x)\,=\,
\left(
\begin{array}{cc}
2-x & -1 \\
1 & 0 
\end{array}
\right)
$$
The solution of the linear Master Equation is thus of this form:
$$
V(\lambda x)\,=\,A(x)\,V(x)
$$
As a result, when $|x|<1$ and $\lambda$ irrational
the sequence: $A(\lambda^n x)....A(x)$
has bounded solutions states. Surprising considering that
half the time the matrices $A(x)$ are hyperbolic ($trace(A(x))=2-x$).
This fact must have been known by Herman, Yoccoz, etc.
\end{itemize}

\vskip.2in

\begin{figure}[ptbh]
\centering
\scalebox{0.3}{\includegraphics{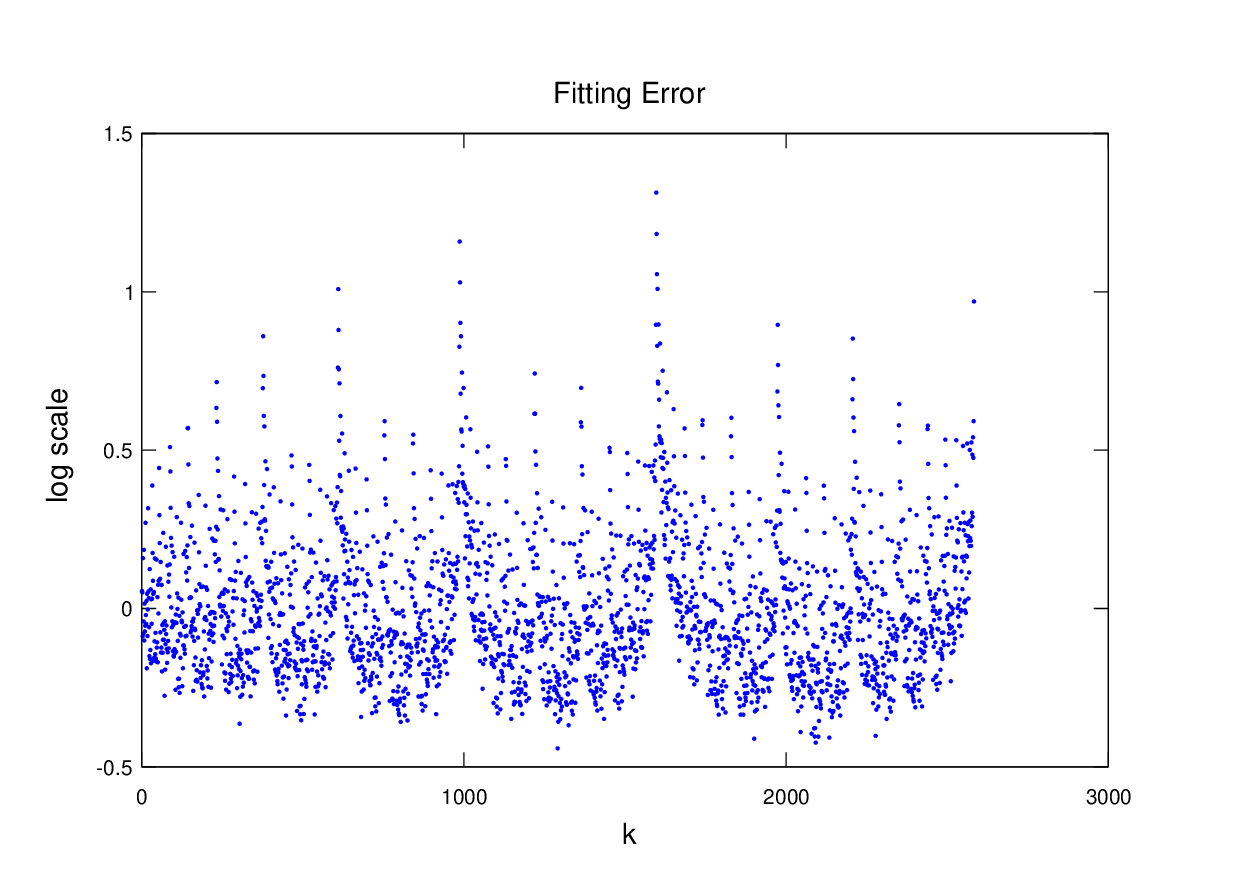}} \\
\caption{\emph{ 
Shown is the result of the difference ('error') between the
values of the sequence $\ln(G(k))$ and the linear fit of
these sequences: $S\ell(k)$, $k$, $1$ and $\ln(k)$ for the same index range. The weights were found to be as follows:
$\ln(G(k))\sim -.910939 S\ell(k) + 0.010279 k -0.208561 - 1.30588 \ln(k)$,
The term linear in $k$ is small and indicates that we are not quite at the radius of convergence, the constant term does not matter, coefficient in front of $\ln(k)$ shows some decay. However what matters is the distribution of $S\ell$. The 'peaks' in the error figure occur at the closest returns $q_n$. This formula 'predicts', since the peak of the distribution for $S\ell(k)/log(k)$ is at 1 that, grosso modo, the terms $\log G(k,r_{\alpha})$ grow as $\sim (-.9 - 1.30)log(k) = (-2.2) \log(k)$, which implies/suggests smoothness on the order of 
$C^{1.2}$.   
 }}
\label{fig:Fitting-Errors-image}
\end{figure}

The goal is to understand the distribution
properties of the sequence $S\ell(n)$. While the mean $\dfrac{S\ell(n)}{n}$ tends to zero, the individual terms are not uncorrelated, and its distribution properties, strangely enough, apparently not broadly understood
\footnote{
There may be work of J-P Conze that is relevant in this connection.
Also there appears to be a connection to Szemeredi theory.}
.

\vskip.2in
\noindent

\begin{centering}\section{Open Problems} \label{section :open problems} \end{centering}
\setcounter{figure}{0} \setcounter{equation}{0}

\vskip .2in
\noindent
\begin{itemize}
\item What of these results extend to the general class
of Bruno numbers?

\item Are there singular examples of the Perez-Marco, Avila-Buff-Cheritat variety of Siegel disks with $C^{\infty}$ boundaries in this example?

\item How do periodic orbits accumulate on Siegel disks? 

\item Consider one of the simplest nonlinear complex dynamical systems in $C^2$:
$$     T(z,w) = (c z , w (1-z) ) \; ,  $$
where $c = \exp(2\pi i \alpha)$ and $\alpha$ is the golden mean. This is one
the simplest quadratic systems, we can write down in $\mathbb{C}^2$. How fast
does the orbit behave on the invariant cylinder $|z|=1 \times \mathbb{C}$? We have
$$     T^n(z,w) = (z_n,w_n) = (c^n z,  w (1-z) (1-c z) 1-c^2 z) ...(1-c^{n-1} z)  $$
and 
$$
\log|w_n| =  \sum_{k=1}^n \log| 1-e^{2\pi i k \alpha} | = S_{n}/2
$$
because $2 \log|1-e^{ix}| =  \log(2-2 \cos(x))$.
It would be nice to study the global behavior of the holomorphic map $T$
in $\mathbb{C}^2$ which implements the random walk over the golden circle on a subset.
\end{itemize}

\section{Appendix} \label{section :Appendix}

\begin{centering}\subsection{Zygmund etc.} \label{section :Zygmund} \end{centering}

We recall the following definitions, see \cite{HS}.
Let $f: \mathbb{S}^1\to \mathbb{R}$ be a function (instead of the circle we could equally consider an interval).

\vskip.1in
\noindent
\begin{definition}
$f$ is Zygmund if there exists $B$ so that
$$
sup_{x,t} \dfrac{|f(x+t)+f(x-t)-2f(x)|}{t}\,\leq\,B
$$
\end{definition}

\vskip.1in
\noindent
\begin{definition}
$f$ is of Bounded Zygmund Variation (BVZ) if there exists $B$ so that
$$
sup_{x_0<x_1<..x_n} \sum_{i=0}^{n-1}
|f(x_i)+f(x_{i+1})-2f(\dfrac{x_i+x_{i+1}}{2})|\,\leq\,B
$$
\end{definition}

\vskip.1in
\noindent
\begin{definition}
$f$ is of Bounded Quadratic Variation (BQZ) if there exists $B$ so that
$$
sup_{x_0<x_1<..x_n} \sum_{i=0}^{n-1}(f(x_i)-f(x_{i+1}))^2\,\leq\,B
$$
\end{definition}

The strongest result to date in Denjoy theory for circle diffeomorphisms is, see \cite{HS}.

\begin{theorem}
A circle diffeomorphism with irrational rotation number
with logarithmic derivative of class BVZ \textbf{and} BQZ is
topologically conjugate to a rotation.
\end{theorem}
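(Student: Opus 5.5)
The statement is Herman–Sullivan's Denjoy-type theorem: a circle diffeomorphism $f$ with irrational rotation number, whose logarithmic derivative $\log Df$ is both BVZ and BQZ, is topologically conjugate to a rotation. By Poincaré's classification, $f$ is semi-conjugate to the rotation $R_\rho$. A topological conjugacy exists exactly when $f$ has no wandering interval. So the plan is to argue by contradiction: assume a wandering interval $I$, that is, an interval whose iterates $f^n(I)$, $n\in\mathbb{Z}$, are pairwise disjoint, and derive an impossible estimate on the lengths $|f^n(I)|$.

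\textbf{Step 1: summability.} The intervals $f^n(I)$ are disjoint, so $\sum_{n\in\mathbb{Z}}|f^n(I)|\le 1$. In particular $|f^{q}(I)|\to 0$ along any sequence of times, including the closest-return times $q_k$.

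\textbf{Step 2: combinatorics of closest returns.} Let $q_k$ be the closest-return denominators. Fix a point $x$ in $I$ and let $J$ be a slightly enlarged interval around $I$, of the type used in Denjoy/Yoccoz arguments. Then the intervals $f^j(J)$, $0\le j<q_k$, have bounded intersection multiplicity. This is purely combinatorial and follows from the ordering of orbits of rotations.

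\textbf{Step 3: distortion control.} We need
$$\log\frac{|f^{q_k}(I)|}{|I|}\;\ge\;-C$$
uniformly in $k$. This contradicts Step 1, since $f^{q_k}(I)$ are disjoint intervals whose lengths must tend to zero. Write $\log|Df^{q_k}(y)|-\log|Df^{q_k}(y')|=\sum_{j<q_k}\bigl(\phi(f^jy)-\phi(f^jy')\bigr)$ with $\phi=\log Df$. In the BV case this sum is bounded by $\mathrm{Var}(\phi)$ via Denjoy–Koksma. Here $\phi$ is not BV, so I would do two things.

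(a) Replace first differences by symmetric second differences. Compare $f^{q_k}$ and $f^{-q_k}$ on the triple $f^{-q_k}(I),I,f^{q_k}(I)$ by computing the cross-ratio distortion $\log\frac{|f^{q_k}(I)|\,|f^{-q_k}(I)|}{|I|^2}$. Expanded along the orbit, this produces terms of the form $\phi(a)+\phi(b)-2\phi(\frac{a+b}{2})$ plus quadratic error terms. The BVZ hypothesis bounds the sum of the second-difference terms over a disjoint family of intervals by $B$.

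(b) Bound the remaining error terms by $\sum(\phi(x_i)-\phi(x_{i+1}))^2$ over the same disjoint family, using BQZ. This works because these errors arise as squares of first differences from the Taylor expansion of $\log(1+u)$ when passing from $\phi$ to ratios of lengths.

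Together these give $|f^{q_k}(I)|\,|f^{-q_k}(I)|\ge c|I|^2$. That is incompatible with both factors tending to zero in Step 1, which is the contradiction.

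\textbf{Main obstacle.} The hardest step is (a). The intervals produced by the dynamics are not exactly symmetric about midpoints, so converting the cross-ratio distortion into genuine Zygmund second differences $\phi(x_i)+\phi(x_{i+1})-2\phi(\frac{x_i+x_{i+1}}{2})$ requires care. I would first try integrating $\phi$ over $f^j(I)$ and its two neighbours, then use the near-symmetry of closest-return configurations. Any asymmetry defect is first order in the differences, so it can be absorbed into the BQZ quadratic term via Cauchy–Schwarz.
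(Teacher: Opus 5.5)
The paper gives no proof of this statement; it only quotes it from \cite{HS}. Your sketch therefore has to stand on its own, and it breaks at Step 3(a). The quantity $\log\frac{|f^{q_k}(I)|\,|f^{-q_k}(I)|}{|I|^2}$ is not a cross-ratio, and expanding it along the orbit produces no second differences. Write $I_j=f^j(I)$, $q=q_k$, and $\Phi(K)=\log\frac{|f(K)|}{|K|}=\log\bigl(\frac{1}{|K|}\int_K e^{\phi}\bigr)$. Telescoping then gives exactly
\[
\log\frac{|I_{q}|\,|I_{-q}|}{|I|^{2}}=\sum_{j=0}^{q-1}\bigl(\Phi(I_j)-\Phi(I_{j-q})\bigr).
\]
This is a sum of \emph{first} differences of averaged $\phi$ between the two intervals $I_{j-q}$ and $I_j$. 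There is no third interval and no midpoint, so nothing cancels the affine part of $\phi$. It is precisely the Denjoy--Koksma sum, and bounding it uniformly in $k$ is exactly what bounded variation is used for. The ``asymmetry defect'' you plan to absorb is therefore the whole term, not a correction. BVZ and BQZ give no control of sums of first differences over disjoint families: Zygmund-class functions satisfy both conditions without being of bounded variation. Cauchy--Schwarz, applied to these summands $\Delta_j$, gives at best $\sum_{j<q_k}|\Delta_j|\le\sqrt{q_k}\,\bigl(\sum_j\Delta_j^2\bigr)^{1/2}\lesssim\sqrt{q_kB}$, which grows with $k$. So (a) and (b) do not yield $|f^{q_k}(I)|\,|f^{-q_k}(I)|\ge c|I|^2$.

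What these hypotheses do control is a genuine four-point cross-ratio, distorted by a single map rather than by comparing $f^{q_k}$ with $f^{-q_k}$. Take $J\subset T$ with $T\setminus J=L\cup R$ and $\mathrm{Cr}(T,J)=\frac{|T||J|}{|L||R|}$. Then $\log\frac{\mathrm{Cr}(fT,fJ)}{\mathrm{Cr}(T,J)}=\Phi(T)+\Phi(J)-\Phi(L)-\Phi(R)$. Replacing each $\Phi(K)$ by the mean of $\phi$ on $K$ costs $O\bigl((\mathrm{osc}_K\phi)^2\bigr)$, and BQZ lets you sum these errors; this is where your step (b) belongs. What remains vanishes for affine $\phi$, so it is a genuine but non-symmetric second difference. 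Reducing it to the symmetric expressions in BVZ is the real technical lemma, and your sketch does not supply it. With that lemma and the bounded multiplicity of $f^j(T)$, $0\le j<q_k$, you get uniformly bounded cross-ratio distortion of $f^{q_k}$ on $(T,I)$. Here $T$ is the hull of $f^{-q_k}(I)\cup f^{q_k}(I)$, and the multiplicity bound is your Step 2. The ending must change as well. Bounded cross-ratio distortion cannot give your product inequality: a hyperbolic M\"obius map $g$ distorts no cross-ratio, yet for suitable disjoint $g^{-1}(I),I,g(I)$ it makes $|g(I)|\,|g^{-1}(I)|/|I|^2$ as small as you like. The contradiction has to come from a Yoccoz--Swiatek-type argument excluding wandering intervals for homeomorphisms that satisfy a cross-ratio inequality; that is the framework \cite{HS} adapts.
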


\vspace{\fill}

\end{document}